\newtheorem{theorem}{Theorem}
\newtheorem{assumption}{Assumption}
\newtheorem{definition}{Definition}
\theoremstyle{definition}
\newtheorem{example}{Example}
\newcommand{\E}{\mathbb{E}}
\newcommand{\R}{\mathbb{R}}
\newcommand{\Z}{\mathbb{Z}}
\newcommand{\ignore}[1]{}
\title{Identifying Effective Scenarios for Sample Average
Approximation}
\date{\today}
\author{
Lijian Chen\\
 Department of MIS, Operations Management, and Decision Sciences\\
 University of Dayton\\
 {\tt lchen1@udayton.edu}\\
}
\begin{document}

\maketitle
{\bf ABSTRACT}\ \\

We introduce a method to improve the tractability of the
well-known Sample Average Approximation (SAA) without compromising
important theoretical properties, such as convergence in probability and
the consistency of an independent and identically distributed (iid)
sample. We consider each scenario as a polyhedron of the mix of
first-stage and second-stage decision variables. According to 
John's theorem, the L\"owner-John ellipsoid of each polyhedron will be
unique which means that different scenarios will have correspondingly
different L\"owner-John ellipsoids. By optimizing the objective
function regarding both feasible regions of the polyhedron and its
unique L\"owner-John ellipsoid, respectively, we obtain a pair of
optimal values, which would be a coordinate on a two-dimensional plane. The
scenarios, whose coordinates are close enough on the plane, will be treated as one scenario; thus our
method reduces the sample size of an iid sample considerably. Instead
of using a large iid sample directly, we would use the cluster of low-cost
computers to calculate the coordinates of a massive number of scenarios
and build a representative and significantly smaller sample to
feed the solver. We show that our method will obtain the optimal
solution of a very large sample without compromising the 
solution quality. Furthermore, our method would be implementable as a
distributed computational infrastructure with many but low-cost
computers.


{\bf KEYWORDS:} Sample Average Approximation, L\"owner-John Ellipsoid,
Stochastic Programming\ \\
{\bf HISTORY:} This paper was 
first submitted on March 21, 2019.


%


\section{Introduction}
\label{sec:introduction}

A standard formulation of the two-stage stochastic program is
\begin{equation}
  \label{eq:1}
  \min_x\{c^Tx+\E[Q(x,\xi)],x\in X\subset \R^n\}
\end{equation} where
\begin{equation}
  \label{eq:2}
  Q(x,\xi):=\inf_y\{q^Ty:Wy\geq h-Tx, y\in Y\subset \R^m\}\nonumber
\end{equation}
and $\xi$ represents a random vector whose distribution has finite support. $W$ is a deterministic matrix, 
while $h$ and $T$ would be functions with respect to $\xi$. If $X\subset \Z^n,$ and
$Y\subset \Z^m$, the problem becomes a stochastic integer program.

A common practice to numerically solve stochastic (integer) program is
by sampling. In \cite{kleywegt2002sample}, the authors generate
iid realizations, $\xi^1,\ldots,\xi^N$, of the random vector $\xi$ and
approximate the objective function $\E[Q(x,\xi)]$ with its average
$\dfrac{1}{N}\sum_{i=1}^NQ(x,\xi^i)$. The stochastic (integer) program
becomes a deterministic model as follows:
\begin{equation}
  \label{eq:3}
  \min_x\{c^Tx+\dfrac{1}{N}\sum_{i=1}^NQ(x,\xi^i),x\in X\}
\end{equation}
Let $\hat\nu_N$ and $\hat x_N$ denote the optimal value and the
optimal solution of \eqref{eq:3}, respectively while $\nu^*$ and $x^*$
represent the optimal value and the optimal solution of
\eqref{eq:1}. The authors showed that both $\hat \nu_N$ and $\hat x_N$
will converge to their counterparts as the sample size $N$ becomes
large enough, regardless of the distribution of $\xi$. These 
results are referred to as convergence in probability and
consistency. This method is referred to as the Sample Average Approximation
(SAA) and in \cite{homem2008rates}, we have the convergence results
regarding a few sampling methods. We find the recent advance of SAA in \cite{homem2014monte} and
\cite{rahimian2018identifying} with references therein.  

The computational cost is entirely determined by the sample size $N$. The
theoretical argument in determining the sample size $N$ is based on
the Large Deviation theory (see \cite{shapiro2009lectures}). According
to this theory, there is a high probability that the values of the
sample average approximation and the true function are close to each
other at a sufficiently dense set of points. The result is impractical
because the required sample size has been one of the primary
impediments to SAA's implementation in practice. Moreover, in \cite{dyer2006computational}, the stochastic 
program is recognized as \#P-hard, which indicates that it is computationally
intractable. The term \#P-hard, rather than NP-hard, is used to
describe the fact that the computer hardware will be overwhelmed by the
number of scenarios required to complete the numerical method. For the 
stochastic integer program, it becomes worse because a large sample
size will lead to an unrealistically large deterministic integer
program.

Efforts to reduce the sample size may introduce the SAA method to many
scenario-rich industries, such as financial planning and
logistics. There are a few major ideas: the scenario reduction method, 
such as \cite{dupavcova2003scenario}, the variance reduction method in
\cite{xiao2014proximal}, and the Quasi-Monte-Carlo method in
\cite{leovey2015quasi}. There are many articles prior to the papers
mentioned above. In \cite{kim2015guide}, the authors present a
comprehensive review on the SAA, such that a moderately large sample is more
likely to be satisfactory for some problems. Thus, when a practitioner encounters a
stochastic (integer) program, the choice would be either to hope for a
moderately large sample that will deliver a satisfactory numerical solution, or
to adopt the reduction methods or the Quasi-Monte-Carlo method with 
non-iid samples.

The SAA method with a moderately large sample \emph{could} be either 
satisfactory or otherwise because the sample size requirement imposed
by the Large Deviation theory will never be met in practice. The
obtained solution may still converge to the true optimal solution, while the variation of the
obtained optimal value could be concerning. A viable solution to 
reducing the variation of the optimal solution is to increase the sample
size of the iid sample, which will lead to intractability. Reduction
methods, such as scenario reduction, variance  
reduction, and the Quasi-Monte-Carlo, aim to sample the random variable in an
artificially defined manner. These methods will effectively reduce the
sample size and assure tractability. However, since the scenarios
of these methods are no longer independently generated, the obtained optimal solution
may exhibit, at least theoretically, bias with respect to the true
optimal solution and the distribution of the sample average will no
longer be normal. Therefore, the situation is
that if we use the SAA method, we may have a computationally challenging optimization
problem, while we may incur an inconsistent optimal solution when adopting
the reduction methods.

In this paper, we study a method to preserve the convergence in
probability, the consistency regarding the optimal solution and to
control computational tractability. The idea is to first 
generate a {\it large enough} Independent and Identically Distributed (iid)
sample. Instead of using this large iid sample directly, we will
attach values to each sampled scenario as a measure of similarity, and
we will cluster ``similar'' scenarios as a representative scenario
with an adjusted probability to formulate a new, but reduced sample. We
show that the newly formed sample will yield a consistent optimal
solution with a bounded difference to the counterpart of the original
large iid sample. 

The essential result is to define the measure of similarity. We
realize that each scenario is described as a polyhedron if not
empty. To measure the similarity of polyhedrons of the same dimension,
we need to compare them as distinct geometric objects, which is
difficult due to the exploding number of vertices. We have to
transform polyhedrons into comparable geometric objects, and the most
well-known object is the L\"owner-John ellipsoid, which is the maximum volume inscribed ellipsoid that
is contained in the polyhedron. The L\"owner-John ellipsoid is unique to each
convex body (\cite{john2014extremum}). In other words, the mapping
from the polyhedron to its L\"owner-John ellipsoid is a one-to-one
correspondence. For the same objective function regarding different 
feasible regions such as the polyhedron and its L\"owner-John
ellipsoid, the pair optimal values would be considered as a coordinate on a
two-dimensional plane. We define the similar scenarios as the
scenarios whose coordinates stay close and our
method will identify similar scenarios and consolidate them as one
representative scenario to achieve the goal of reducing the sample
size. 

Our method is both theoretically and computationally plausible. The
L\"owner-John ellipsoid is a good choice when approximating
polyhedrons of distinct scenarios because the mapping from the
polyhedron to its L\"owner-John ellipsoid 
is a one-to-one correspondence. We realize that distinct scenarios will have distinct
L\"owner-John ellipsoids, which are strictly convex. When the
objective function is convex and we use the L\"owner-John ellipsoid as
the feasible region, the optimal values associated with distinct
ellipsoids  will be distinct as well. We also obtain the optimal values of the same
objective function subject to the polyhedron as feasible region. Thus,
we can scalarize distinct scenarios into
distinct pairs of coordinates and the seemingly different geometric objectives become
comparable as the coordinates on a two-dimensional plane. 

The efficient computational methods of L\"owner-John
 ellipsoids are available; see \cite{sun2004computation} for more
details. There are multiple software packages, such as SeDuMi and
SDPA, which are ready for implementations. Furthermore, since our method will
evaluate the similarity of each scenario, it can be implemented
on multiple, low-cost computers deployed in parallel to process a
large iid sample. The benefit of our method is to reduce the problem
scale to its fractions to improve the tractability of SAA of large
sample. The remainder of the paper is organized as 
follows. We present the connection between a sampled scenario and an 
ellipsoid in Section \ref{sec:ellipsoid}. In Section
\ref{sec:measuring-similarity}, we show the computation of the measure
of similarity among distinct scenarios, followed by details regarding the
clustering method, along with the result of preserving of convergence
in probability and consistency of the SAA in Section
\ref{sec:clust-simil-scen}. In Section 
\ref{sec:numerical-results}, we present multiple examples to show
the numerical results, and we conclude our research in Section
\ref{sec:conclusion}. 

\section{L\"owner-John ellipsoid associated with each scenario}
\label{sec:ellipsoid}

In this section, we present the approximation of a scenario using the
L\"owner-John ellipsoid. Suppose that we have a two-stage stochastic
(integer) programming. The distribution of $\xi$ would be either
continuous or discrete in terms of finite support. Additionally, there would be finitely
 or infinitely many realizations. The problem is solved by the SAA with a sample size $K$, i.e.,
$\xi^1,\ldots,\xi^k,\ldots,\xi^K$ are $K$ realized scenarios:
\begin{equation}
  \label{eq:8}
  \min_x\{c'x+\dfrac{1}{K}\sum_{k=1}^KQ(x,\xi^k)]|Ax\leq b, x\geq 0\}\nonumber
\end{equation} and
\begin{equation}
  \label{eq:9} 
  Q(x,\xi^k)=\min_y\{q_k'y|Wy\leq h_k-T_kx, y\geq 0\}\nonumber
\end{equation} where the matrices $q_k, T_k, h_k \in \R^{m}$ would be
functions of $\xi$, and $W$ is a fixed $\ell\times m$ matrix. $Q(x,\xi)$ refers to
the recourse function. 

We assume that when the first-stage decisions are discrete, the set of first-stage
decisions is finite and non-empty. Also, the recourse function
$Q(x,\xi)$ is measurable and both $\E[Q(x,\xi)]$ and $\E[Q^2(x,\xi)]$
are finite for every feasible $x$. Otherwise, i.e., if the first-stage
decisions are continuous, the set of first-stage decisions is
non-empty, compact, and polyhedral. Also, the recourse function
$Q(x,\xi^k)$ is finite for $k=1,\ldots,K$. In addition, we assume that for any
first-stage decision, the feasible set of the recourse variable $y$ is
non-empty and finite. 

The SAA method solves the following deterministic optimization
problem.
\begin{alignat}{2}
  \label{eq:5}
  \min_{x,y_1,\ldots,y_K}&\
  c'x+\dfrac{1}{K}\sum_{k=1}^Kq_k'y_k\nonumber\\
  \textrm{subject to}&\ Ax\leq b, x\geq 0, x\in X\subset
  \R^n\\
  &\ Wy_k\leq h_k-T_kx, y_k\geq 0, k=1,\ldots,K.\nonumber
\end{alignat} and the first-stage optimal solution is $x_K^*$ with the optimal
value $\nu^*_K$. We are interested in the case of only one scenario,
$\xi^k, k\in \{1,\ldots,K\}$, which is selected. We have 
\begin{alignat}{2}
\label{eq:10}
  \kappa(k):=\min_{x_k,y_k}&\
  c'x_k+q_k'y_k\nonumber\\
  \textrm{subject to}&\ Ax_k\leq b, x_k\geq 0, x_k\in X\subset
  \R^n\\
  &\ Wy_k\leq h_k-T_kx_k, y_k\geq 0.\nonumber
\end{alignat} where $x_k$ and $y_k$ are the first-stage and
second-stage decisions, respectively. We define that for a non-empty
and bounded sampled
scenario $\xi^k$, 
\begin{definition}
  \label{sec:ellipsoid-1}
  \begin{equation}
    \label{eq:11}
    P_k:=\{[x_k;y_k]|Wy_k\leq h_k-T_kx_k, y_k\geq 0\}
  \end{equation} is the $(n+m)$ dimensional scenario polyhedron of the
  scenario $\xi_k$.   
\end{definition}
We note that if the decisions are integers, i.e., $x_k,y_k\in\Z^{n+m}$, we
need to relax the integer constraints to $x_k,y_k\in\R^{n+m}$ to
assure that $P_k$ a non-empty and bounded polyhedron.

\begin{definition}
  \label{sec:ellips-assoc-with-1}
  An ellipsoid $E$ of $\R^{n+m}$ is an affine image of the unit ball
  $B_{n+m}:=\{u\in\R^{n+m}:||u||\leq 1\}$, that is
  \begin{equation}
    \label{eq:12}
    E:=\{c+Su:u\in\R^{n+m},||u||\leq 1\}, \textrm{or
    }E:=\{x\in\R^{n+m}:||S^{-1}(x-c)||\leq 1\}\nonumber
  \end{equation} where $S\in\R^{(n+m)\times(n+m)}$ is a symmetric,
  non-singular, and positive definite matrix. 
\end{definition}
Also, we need the following assumption.
\begin{assumption}
  \label{sec:ellips-assoc-with}
  $P_k$ satisfies the Slater's condition. 
\end{assumption}
This assumption is rather important because otherwise, a bounded and non-empty
$P_k$ could be a degenerate polyhedron, which has zero volume for
L\"owner-John ellipsoid. More importantly, many two-stage stochastic
programming has equality constraints in the second stage. For
example, in the aircraft allocation problem in
\cite{dantzig2016linear}, the second stage is constructed with
equality constraints and the L\"owner-John ellipsoid will
degenerate. Consider the equality constraint
\begin{equation}
  \label{eq:444}
  Wy_k=h_k-T_kx, y_k\geq 0, k\in \{1,\ldots,K\}
\end{equation}
To satisfy the slater's condition, we replace the constraints
\eqref{eq:444} by the following
\begin{equation} 
  \label{eq:37}
  Wy_k\leq h_k-T_kx+\epsilon\mathbf{1}, -Wy_k\leq T_kx-h_k+\epsilon\mathbf{1}, y_k\geq 0, k\in \{1,\ldots,K\}
\end{equation} where $\mathbf{1}\in \R^{\ell}$ with all the components
are valued 1 and $\epsilon>0$ is a small enough value. 

We now present the well-known John's theorem.
\begin{theorem}
  \label{sec:ellips-assoc-with-2}
  Let $C$ be a non-empty and bounded polyhedron, which satisfies the
  Slater's condition, in $\R^{n+m}$. There exists a unique
  ellipsoid, the L\"owner-John ellipsoid, which is the smaximum value
  inscribed ellipsoid of C. 
\end{theorem}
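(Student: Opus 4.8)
The plan is to establish existence and uniqueness separately, treating the maximum-volume inscribed ellipsoid problem as a convex optimization problem over the cone of positive definite matrices. First I would parametrize an arbitrary ellipsoid inside $C$ by its center $c$ and shape matrix $S$ as in Definition \ref{sec:ellips-assoc-with-1}, and express the volume of $E=\{c+Su:\|u\|\le 1\}$ as $\vol(B_{n+m})\cdot\det(S)$, so that maximizing volume over inscribed ellipsoids is equivalent to maximizing $\det(S)$ subject to the containment constraint $E\subseteq C$. Writing $C=\{x:a_i^Tx\le b_i,\ i=1,\dots,p\}$, the containment $E\subseteq C$ is equivalent to the system of linear constraints $\|S a_i\|+a_i^Tc\le b_i$ for each $i$, since $\max_{\|u\|\le 1}a_i^T(c+Su)=a_i^Tc+\|S a_i\|$.

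Next I would argue existence. The feasible set of pairs $(c,S)$ with $S\succeq 0$ satisfying these containment inequalities is closed, and since $C$ is bounded the shape matrix $S$ is bounded, so the feasible region is compact. Because $C$ satisfies Slater's condition (Assumption \ref{sec:ellips-assoc-with}), $C$ has nonempty interior, guaranteeing the existence of at least one ellipsoid with $\det(S)>0$ strictly inside $C$; hence the supremum of $\det(S)$ is positive. The objective $\det(S)$ is continuous, so a maximizer exists by the extreme value theorem, and it must be positive definite (not merely semidefinite) since $\det(S)>0$ at the optimum.

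For uniqueness, the key is strict log-concavity of the objective under a convexity-preserving change of variables. I would pass to the concave objective $\log\det(S)$, which is strictly concave on the set of symmetric positive definite matrices. The containment constraints $\|S a_i\|+a_i^Tc\le b_i$ are convex in $(c,S)$, so the feasible region is convex. Suppose two distinct maximizers $(c_1,S_1)$ and $(c_2,S_2)$ both attain the optimal value; then the midpoint $\bigl(\tfrac12(c_1+c_2),\tfrac12(S_1+S_2)\bigr)$ is feasible, and by strict concavity of $\log\det$ its objective value strictly exceeds the common optimum unless $S_1=S_2$, a contradiction. Once $S_1=S_2$, a separate argument pins down the center: if the shape matrices agree but the centers differ, one can dilate the common shape matrix slightly while keeping the ellipsoid inside $C$, increasing the volume and again contradicting optimality. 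This gives uniqueness of both $S$ and $c$.

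The main obstacle will be the uniqueness argument, specifically verifying that $\log\det$ is \emph{strictly} concave over the feasible set in a way that rules out the degenerate case where the two optimal shape matrices coincide but the ellipsoids differ only by a translation. Strict concavity of $\log\det$ on positive definite matrices is classical, but care is needed because the feasible set is a joint convex set in $(c,S)$ and the objective ignores $c$; the final step handling equal shape matrices with distinct centers must invoke the strict convexity of the ellipsoid boundary together with Slater's condition to produce the contradiction. This is precisely where Assumption \ref{sec:ellips-assoc-with} is indispensable, since without an interior the optimal ellipsoid could collapse to lower dimension and uniqueness would fail.
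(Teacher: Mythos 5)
The paper does not actually prove this theorem --- it defers entirely to the literature (citing \cite{boyd2004convex}) --- so your sketch is being compared against the standard argument in that reference, and it is essentially that argument: parametrize inscribed ellipsoids by $(c,S)$, reduce containment to the second-order-cone constraints $\|Sa_i\|+a_i^Tc\le b_i$, get existence from compactness plus Slater (which guarantees a nondegenerate feasible ellipsoid), and get uniqueness of $S$ from strict concavity of $\log\det$ on the positive definite cone. That part is correct. The one step you should tighten is the center-uniqueness argument: if $(c_1,S)$ and $(c_2,S)$ are both optimal with $c_1\ne c_2$, you cannot simply ``dilate the common shape matrix slightly'' --- an isotropic dilation of the ellipsoid centered at $\tfrac12(c_1+c_2)$ will in general poke out of $C$. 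The correct move is anisotropic: both translates $c_1+SB_{n+m}$ and $c_2+SB_{n+m}$ lie in $C$, hence so does their convex hull, and that convex hull contains an ellipsoid centered at $\tfrac12(c_1+c_2)$ obtained by stretching $S$ only along the direction $c_1-c_2$, which has strictly larger determinant --- contradiction. Also, Slater's condition is what you need for existence of a full-dimensional optimum; the translation step uses only convexity of $C$ and this elongation construction, not Slater. With that repair the proof is complete and supplies what the paper omits.
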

We find the proof of Theorem \ref{sec:ellips-assoc-with-2} in many
articles, e.g., \cite{boyd2004convex}. For each $P_k$, we use
$(S(k),c(k))$ to represent the L\"owner-John ellipsoid, where
$c(k)$ is the center of the ellipsoid, and $S(k)$ is the symmetric
positive definite matrix. The calculation of L\"owner-John ellipsoid
is to solve a semi-definite program, which is a convex optimization problem. The
objective function is to minimize the logarithmic function of the
determinant of $S^{-1}(k)$. The computational complexity is
$O[(n+m)^3]$ (see \cite{boyd2004convex}), which indicates the
availability of efficient solution methods. For $P_k$, we have
\begin{equation}
  \label{eq:13}
   (S(k), c(k))\subset P_k\in \R^{n+m}
\end{equation}
and $(S(k), c(k))$ is unique to $P_k$ because Theorem
\ref{sec:ellips-assoc-with-2} proves that the mapping from
each $P_k$ to $(S(k),c(k))$ is a one-to-one correspondence. 

\section{Measuring similarity}
\label{sec:measuring-similarity}

We now construct a measure of similarity among distinct $P_k,
k=1,\ldots,K$. Consider two distinct scenario polyhedrons, $P_j$ and
$P_k$, such that $j,k\in\{1,\ldots,K\}$ and $j\neq k$. $P_j$'s
L\"owner-John ellipsoid is $(S(j),c(j))$ and $(S(k),c(k))$ is $P_k$'s
L\"owner-John ellipsoid. We define
\begin{equation}
\label{eq:27}
 \sigma(k):=\min_{x_k,y_k}\{c'x_k+q'_ky_k: [x_k;y_k]\in
 (S(k),c(k)), x_k, y_k\geq 0\}
\end{equation} and its optimal solution is $[x_k^*;y_k^*]$. By the
feasible region argument, we have 
\begin{equation}
  \label{eq:28}
  -\infty \leq \kappa(k)\leq \sigma(k)\leq \infty
\end{equation}
$\sigma(k), \textrm{ and } \kappa(k), k=1,\ldots,K$ would be
easily calculated. We also assume
\begin{assumption}
  \label{sec:measuring-similarity-1}
  There exists a value $\mathbb{D}<\infty$ that
  $|\sigma(k)-\kappa(k)|\leq \mathbb{D}, k=1,\ldots,K$.
\end{assumption}

The measure of similarity involving the scenario $\xi^k$ would be
$(\kappa(k), \sigma(k))$. Such a poll of coordinates ($k=1,\ldots,K$) formulates
a scatter plot. By using a fine grid setting, we would be able to choose
representative scenarios. According to Theorem \ref{sec:ellips-assoc-with-2}, the
mapping from $P_k$ to $(S(k),c(k))$ would be the one-to-one
correspondence, which means that although comparing $\xi^j$ and
$\xi^k$ are difficult, the
comparison becomes between two L\"owner-John ellipsoids, $(S(k),c(k))$
and $(S(j), c(j))$. Since L\"owner-John ellipsoid feasible region is
strictly convex, both optimal solutions $[x_k^*;y_k^*]$ and
$[x_j^*;y_j^*]$ and optimal values $\sigma(k)$ and $\sigma(j)$ are
unique. Thus, two distinct scenarios $\xi^k$ and $\xi^j$ are
equivalently converted to two distinct optimal values $\sigma(k)$ and
$\sigma(j)$, respectively. We scalarize the non-comparable scenarios
to comparable real values.

For any feasible first-stage decision $x_0$, we have
\begin{equation}
  \label{eq:29}
  Q(x_0,\xi^k):=\min\{q_k'y: [x_0;y]\in P_k\}\tag{recourse value}
\end{equation} and it is true that $\kappa(k)\leq Q(x_0,\xi^k)$ for
any $x_0$. According to \eqref{eq:28}, we need both $\sigma(k)$ and
$\kappa(k)$ to bound the recourse value of any first-stage decision from both
sides. Thus, we use the coordinate $(\kappa(k), \sigma(k))$ as the
similarity measure. 

The similarity measure $(\kappa(k),\sigma(k))$ will help remove 
stage dependency. In other words, when solving the stochastic program
using the SAA, all
sampled $K$ scenarios must share the same first-stage decision. We
have a dilemma, in that we need to quantify the similarity of $K$ iid
scenarios involving the same first-stage decision, while the stochastic
program needs to find the first-stage decision in order to optimize the
objective. We need a similarity measure, which is consistent to any
first-stage decisions. By using the coordinate $(\kappa(k),\sigma(k))$, for any
first-stage decision $x \in X$, we have
\begin{equation}
  \label{eq:31}
  \kappa (k)\leq Q(x,\xi^k)\leq \sigma(k)\nonumber
\end{equation} where $k=1,\ldots,K$. $(\kappa(k),\sigma(k))$ serves as a
spectrum, and regardless of the first-stage decision, the recourse
value will be well positioned within the spectrum. We define that for
each $x\in X$, 
\begin{equation}
  \label{eq:16}
  \epsilon_k(x):=\sigma(k)-Q(x,\xi^k)\geq 0
\end{equation} and for each pair of distinct scenarios $j$ and $k$,
with Assumption \ref{sec:measuring-similarity-1}, we have, for any
$x\in X$, 
\begin{equation}
  \label{eq:17}
  |\epsilon_j(x)-\epsilon_k(x)|\leq \mathbb{D}
\end{equation}

Consider two distinct scenarios, which could be
significantly different in many geometric aspects. However, the
recourse value under $\xi^k$ of any first-stage decision $x$, $Q(x,\xi^k)$, shares unique upper and
lower bounds associated with the scenario $\xi^k$. Regarding the practice of the SAA, when two seemingly
distinct scenarios' recourse values $Q(x,\xi^k)$ and $Q(x,\xi^k)$
share the similar upper and lower bounds, these two scenarios will
yield similar impact to the sample average
$\dfrac{1}{K}\sum_{k=1}^KQ(x, \xi^k)$ and therefore they may be
possibly considered as similar. In other words, we reduce the iid sample
size by clustering similar scenarios, quantified by the spectrum of
the recourse value. We show in the next section that our
clustering method will preserve the convergence in probability and consistency
results of the SAA. 

\section{Clustering similar scenarios}
\label{sec:clust-simil-scen}

\subsection{Preserving convergence rate and consistency}
\label{sec:pres-conv-rate}

Consider the SAA problem with a sample of size $K$, $\xi^1,\ldots,\xi^K$,
\begin{alignat}{2}
  \label{eq:14}
  \min_x&\ c'x+\dfrac{1}{K}\sum_{k=1}^KQ(x,\xi^k)\nonumber\\
  \textrm{subject to}&\ Ax\leq b, x\geq 0, x\in X\subset \R^n
\end{alignat} with the optimal solution $x_K^*$ and the optimal value
$\nu_K^*:=\nu_K(x_K^*)$. Consider two similar scenarios $\xi^j$ and
$\xi^k$, such that $|\kappa(j)-\kappa(k)|\leq \dfrac{\delta}{2}$ and
$|\sigma(j)-\sigma(k)|\leq \dfrac{\delta}{2}$ for a $\delta>0$. We cluster
 scenario $j$ with scenario $k$. After clustering, the iid
sample $\xi^1,\ldots,\xi^K$ becomes 
$\xi^1,\ldots,\xi^{j-1},\xi^k, \xi^{j+1},\ldots,\xi^K$ with the probability
$\dfrac{1}{K}$. That is, scenario $\xi^k$ has a probability of
$\dfrac{2}{K}$ because we replace $\xi^j$ with $\xi^k$. We have: 
\begin{alignat}{2}  
  \label{eq:15}
  \min_x&\ c'x+\dfrac{1}{K}[\sum_{k\neq j}^K\bigg(\sigma(k)-\epsilon_k(x)\bigg)+\sigma(k)-\epsilon_j(k)]\nonumber\\
  \textrm{subject to}&\ Ax\leq b, x\geq 0, x\in X\subset \R^n
\end{alignat}
The difference between the objectives of \eqref{eq:14} and
\eqref{eq:15} at $x$ is bounded by
\begin{equation}
  \label{eq:18}
  \dfrac{1}{K}\bigg|\sigma(j)-\sigma(k)+\epsilon_k(x)-\epsilon_j(x)\bigg|\leq
  \dfrac{\delta}{2K}+\dfrac{\mathbb{D}}{2K}. 
\end{equation}

For a $\delta>0$, we cluster a certain number of scenarios,
$K(\delta)$. There are $K-K(\delta)$ scenarios left for the SAA. Let
$\mathcal{J}$ represent the set of indices of the scenarios being
clustered, and let $\mathcal{K}$ represent the set of indices of the scenarios
remaining in the model. We refer to scenarios in $\mathcal{K}$ as the
representative scenarios. The optimal solution is $\tilde x_K^*$, and
the optimal value becomes $\tilde \nu_K^*:=\tilde \nu(\tilde
x_K^*)$. We use $\tilde \nu_K(x)$ to represent the objective function of
the reduced sample. The value of $|\tilde \nu_K(x)-\nu_K(x)|$ is the measure of
the solution quality in comparison to the SAA. 
\begin{equation}
  \label{eq:19}
  |\tilde \nu_K(x)-\nu_K(x)|\leq \dfrac{\delta K(\delta)}{2K}+\dfrac{K(\delta)\mathbb{D}}{2K}
\end{equation}
The goal of our clustering approach is to reduce the sample size of an
iid sample and to preserve the consistency and convergence in probability of
the SAA.
\begin{theorem}
  \label{sec:clust-simil-scen-1}
  For an iid sample of size $K$, we cluster $K(\delta)$ times, then,
  \begin{equation}
    \label{eq:20}
    |\tilde \nu_K^*-\nu_K^*|\rightarrow 0
  \end{equation} as $K\rightarrow \infty$ with probability 1. 
\end{theorem}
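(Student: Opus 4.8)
The plan is to reduce the statement to the per-step estimate \eqref{eq:19}, which is already in hand, together with a standard stability argument for the optimal value of a minimization problem under a uniform perturbation of its objective. The key observation is that \eqref{eq:19} does not bound the optimal values directly; it bounds the \emph{pointwise} gap between the two objective functions $\tilde\nu_K(x)$ and $\nu_K(x)$, and this bound is uniform over the common feasible set $X$. So the first move is to convert a uniform objective bound into a bound on the gap between the two \emph{minima}.

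Concretely, write $\eta_K := \frac{\delta K(\delta)}{2K} + \frac{K(\delta)\mathbb{D}}{2K}$, so that \eqref{eq:19} reads $|\tilde\nu_K(x)-\nu_K(x)| \le \eta_K$ for every $x \in X$. I would then run the usual sandwich. Evaluating the reduced objective at the optimizer $x_K^*$ of the full problem gives $\tilde\nu_K^* \le \tilde\nu_K(x_K^*) \le \nu_K(x_K^*) + \eta_K = \nu_K^* + \eta_K$, and symmetrically, evaluating the full objective at the reduced optimizer $\tilde x_K^*$ gives $\nu_K^* \le \nu_K(\tilde x_K^*) \le \tilde\nu_K(\tilde x_K^*) + \eta_K = \tilde\nu_K^* + \eta_K$. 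Combining the two inequalities yields $|\tilde\nu_K^*-\nu_K^*| \le \eta_K$, and I would stress that this holds for \emph{every} realization of the sample, hence surely, so the ``with probability $1$'' in \eqref{eq:20} will follow as soon as $\eta_K \to 0$.

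It then remains to send $K \to \infty$ and argue $\eta_K = \frac{K(\delta)}{2K}\,(\delta+\mathbb{D}) \to 0$. Here $\mathbb{D}$ is the fixed constant of Assumption \ref{sec:measuring-similarity-1} and $\delta$ is the grid resolution, so the decisive quantity is the ratio $K(\delta)/K$ of clustered scenarios to total scenarios, and this is exactly where I expect the main obstacle. The prefactor $1/K$ by itself does not force convergence, because $K(\delta)$ may grow with $K$: for a \emph{fixed} grid of resolution $\delta$ and bounded support of the coordinates $(\kappa(k),\sigma(k))$, almost every new scenario is eventually absorbed into an already-occupied cell, so $K(\delta)$ grows essentially linearly and $\eta_K$ need not vanish. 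I would therefore not claim convergence at fixed $\delta$; instead I would refine the grid with the sample, taking $\delta=\delta_K \downarrow 0$ and requiring the number of clusterings to remain sublinear, $K(\delta_K)=o(K)$, which gives $\eta_K \to 0$ directly. (In the degenerate reading where the number of merges $K(\delta)$ is held fixed as a constant, the limit is immediate since the numerator is bounded while $1/K \to 0$.) Controlling $K(\delta_K)/K$ almost surely is the substantive analytic step, and it is where the iid structure of the sample enters: I would invoke the convergence of the empirical measure of the points $(\kappa(k),\sigma(k))$ to their common law to show that, along a sufficiently slowly vanishing sequence $\delta_K$, the proportion of absorbed scenarios tends to zero with probability one, which then delivers $\eta_K \to 0$ and hence \eqref{eq:20}.
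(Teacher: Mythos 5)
Your sandwich argument is exactly the paper's proof: the paper likewise starts from the two optimality inequalities $\tilde \nu_K(\tilde x_K^*)\leq \tilde \nu_K(x_K^*)$ and $\nu_K(x_K^*)\leq \nu_K(\tilde x_K^*)$ (its \eqref{eq:21}) and combines them with the pointwise bound \eqref{eq:19} to conclude $|\tilde\nu_K^*-\nu_K^*|\leq \eta_K$; your remark that this holds for every realization, so the ``with probability 1'' is automatic once $\eta_K\to 0$, is also correct (the paper's appeal to convexity at this point does no work, and would not even be available in the integer case). Where you genuinely depart from the paper is in the passage to the limit, and there your analysis is the more careful one. The paper simply asserts that ``when $K$ is large enough'' the right-hand side of \eqref{eq:19} is below any $\epsilon$, which tacitly requires $K(\delta)(\delta+\mathbb{D})/K\to 0$; as you observe, for a fixed grid resolution $\delta$ the ratio $K(\delta)/K$ need not vanish --- in the paper's own experiments it is $0.91$, $0.55$, and $0.91$ --- and the constant $\mathbb{D}$ of Assumption \ref{sec:measuring-similarity-1} does not shrink with $\delta$, so the bound stalls near $(\delta+\mathbb{D})/2$ rather than tending to zero. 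Your proposed repair (taking $\delta=\delta_K\downarrow 0$ with $K(\delta_K)=o(K)$) does force $\eta_K\to 0$, but be aware of its costs: it alters the hypotheses of the theorem as stated (which fixes $\delta$ and places no restriction on $K(\delta)$), it requires the clustered fraction to vanish asymptotically, which cuts against the stated purpose of the method, and your final step controlling $K(\delta_K)/K$ by empirical-measure convergence is only sketched and can fail when the law of $(\kappa(k),\sigma(k))$ has atoms, since the $\mathbb{D}$ term in \eqref{eq:18} is incurred per merge no matter how close the merged coordinates are. In short: same reduction to \eqref{eq:19} as the paper, but you have correctly isolated a step in the paper's own proof that is not justified as written.
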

\begin{proof}
  By definition, we have
  \begin{equation}
    \label{eq:21}
    \tilde \nu_K(\tilde x_K^*)\leq \tilde \nu_K(x_K^*) \textrm{ and }
    \nu_K(x_K^*)\leq \nu_K(\tilde x_K^*)
  \end{equation}
  By \eqref{eq:19}, when $K$ is large enough, and for any $\epsilon>0$,
  we have
  \begin{equation}
    \label{eq:23}
    \tilde \nu_K(\tilde x_K^*)\leq \tilde \nu_K(x_K^*)\leq
    \nu_K(x_K^*)+\epsilon \textrm{ and }\nu_K(x_K^*)\leq \nu_K(\tilde
    x_K^*)\leq \tilde \nu_K(\tilde x_K^*)+\epsilon\nonumber
  \end{equation}
  Both $\tilde \nu_K(x)$ and $\nu_K(x)$ are convex function with
  respect to $x$, and when $K$ is large enough, we have $|\tilde
  \nu^*_K-\nu_K^*|\leq \epsilon$ with probability 1. 
\end{proof}
There is another appearance of the above result: for a sample size
of $K$, we cluster up to $K(\delta)$ times, and we have
\begin{equation}
  \label{eq:24}
  \lim_{K\rightarrow \infty}\mathbb{P}(|\tilde
  \nu_K^*-\nu_K^*|>\epsilon)=0\nonumber 
\end{equation}
We complete the analysis regarding the consistency of the SAA.

We now present the impact of our clustering method on the convergence
in probability of the SAA. The clustering method will reduce the
original sample size $K$ to its fraction. Let
$\mathcal{S}_K^\epsilon$ and $\mathcal{S}^\epsilon$ be the sets of the 
$\epsilon$-optimal solutions of the SAA and the original problems,
respectively. Both $\mathcal{S}_K^\epsilon$ and $\mathcal{S}^\epsilon$
are non-empty and finite for any $\epsilon>0$. Let $\mathcal{S}$
represent the set of optimal solutions of the original problem. When
pursuing different 
accuracy in the SAA and the original problem with an accuracy of $\gamma>0$
and an $\epsilon>0$, respectively, such that $\gamma \leq \epsilon$, the
event $\{\mathcal{S}_K^\gamma\subset \mathcal{S}^\epsilon\}$ means
that the solution of the SAA provides an $\epsilon$-optimal solution
for the original problem. We need the following definition and
assumption.
\begin{definition}
  \label{sec:clust-simil-scen-2}
  Let $X:=\{x|Ax\leq b,x\geq 0\}$. $u(x)$ is a mapping from
  $X\backslash  \mathcal{S}^\epsilon$ into the set $\mathcal{S}$, in
  which 
  $u(x)\in \mathcal{S}$ for all $x\in X\backslash
  \mathcal{S}^\epsilon$, such that for $\epsilon^*:=\min_{X\backslash
    \mathcal{S}^\epsilon}\nu(x)-\nu^*$, $\epsilon^*\geq \epsilon$,
  \begin{equation}
    \label{eq:25}
    \nu(u(x))\leq \nu(x)-\epsilon^*\textrm{ for all }x\in X\backslash \mathcal{S}^\epsilon\nonumber
  \end{equation}
\end{definition}
\begin{assumption}
  \label{sec:clust-simil-scen-3}
  For every $x\in X\backslash \mathcal{S}^\epsilon$, the moment-generating function of the random variable
  $Y(x,\xi):=c'u(x)+Q(u(x),\xi)-c'u-Q(x,\xi)$ is finite valued in a
  neighborhood of $t=0$. 
\end{assumption}
We thus have the following result, whose proof is in
\cite{shapiro2009lectures}.
\begin{theorem}
  \label{sec:clust-simil-scen-4}
  Let $\epsilon$ and $\gamma$ be non-negative numbers, such that
  $\gamma\;eq \epsilon$. Then
  \begin{equation}
    \label{eq:26}
    1-\mathbb{P}(\mathcal{S_K^\gamma}\subset \mathcal{S}^\epsilon)\leq
    M e^{-K\eta(\gamma,\epsilon)}
  \end{equation} where $\eta(\gamma,\epsilon):=\min_{x\in
    X\backslash\mathcal{S}^\epsilon}\mathcal{I}_x(-\gamma)$ and
  $\mathcal{I}_x(\cdot)$ denote the rate function of the random
  variable $Y(x,\xi)$. With Assumption 
  \ref{sec:clust-simil-scen-3}, $\eta(\gamma,\epsilon)>0$, and $M$ is
  the number of scenarios of the original problem. 
\end{theorem}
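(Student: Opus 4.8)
The plan is to prove this as a standard Cram\'er-type large-deviation bound, following the template used for uniform exponential rate results in the SAA literature. First I would rewrite the target probability through its complement: the inclusion $\mathcal{S}_K^\gamma\subset\mathcal{S}^\epsilon$ fails exactly when some first-stage decision $x\in X\setminus\mathcal{S}^\epsilon$ happens to be $\gamma$-optimal for the SAA problem. Because we are in the finite first-stage regime, $X\setminus\mathcal{S}^\epsilon$ contains only finitely many points, and I would control the failure probability by a union bound over this finite set, whose cardinality is the constant $M$ in the statement. Thus it suffices to bound, for each fixed $x\in X\setminus\mathcal{S}^\epsilon$, the probability that $x$ is $\gamma$-optimal for the SAA.

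For a fixed such $x$, the next step is to collapse the event ``$x$ is $\gamma$-optimal for the SAA'' into a one-dimensional tail event for a sample mean. Using the mapping $u(x)\in\mathcal{S}$ from Definition \ref{sec:clust-simil-scen-2}, feasibility of $u(x)$ gives $\nu_K(u(x))\ge\nu_K^*$, so $\gamma$-optimality of $x$, i.e. $\nu_K(x)\le\nu_K^*+\gamma$, forces $\nu_K(u(x))-\nu_K(x)\ge-\gamma$. Writing this difference as the sample average $\frac{1}{K}\sum_{i=1}^K Y(x,\xi^i)$ of the random variable $Y(x,\xi)$, the event becomes $\frac{1}{K}\sum_{i=1}^K Y(x,\xi^i)\ge-\gamma$. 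Definition \ref{sec:clust-simil-scen-2} also pins down the mean: $\E[Y(x,\xi)]=\nu(u(x))-\nu(x)\le-\epsilon^*\le-\epsilon\le-\gamma$, so the threshold $-\gamma$ lies at or above $\E[Y(x,\xi)]$ and the event is a genuine upper-tail deviation of the sample mean above its expectation.

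With this reduction in hand I would apply the Chernoff bound to the i.i.d. sum: $\mathbb{P}\left(\frac{1}{K}\sum_{i=1}^K Y(x,\xi^i)\ge-\gamma\right)\le e^{-K\mathcal{I}_x(-\gamma)}$, where $\mathcal{I}_x$ is the Legendre--Fenchel transform of the cumulant generating function of $Y(x,\xi)$. Summing over the $M$ points of $X\setminus\mathcal{S}^\epsilon$ and replacing each exponent by its smallest value $\eta(\gamma,\epsilon)=\min_{x\in X\setminus\mathcal{S}^\epsilon}\mathcal{I}_x(-\gamma)$ then yields the claimed bound $M e^{-K\eta(\gamma,\epsilon)}$.

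The step that carries the real content, and where Assumption \ref{sec:clust-simil-scen-3} is indispensable, is verifying $\eta(\gamma,\epsilon)>0$. Finiteness of the moment-generating function of $Y(x,\xi)$ in a neighborhood of the origin ensures that $\mathcal{I}_x$ is a proper convex, lower-semicontinuous rate function that vanishes only at the mean $\E[Y(x,\xi)]$ and is strictly positive elsewhere. Since $\E[Y(x,\xi)]\le-\epsilon^*$ while we evaluate the rate function at $-\gamma$, the argument lies strictly to the right of the minimizer whenever $\gamma<\epsilon^*$, and $\epsilon^*\ge\epsilon\ge\gamma$ supplies this separation; hence each $\mathcal{I}_x(-\gamma)>0$, and the minimum over a finite index set stays strictly positive. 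I expect the only delicate point to be the boundary behaviour when $\gamma$ meets $\epsilon$, where positivity must be extracted from the inequality $\epsilon^*\ge\epsilon$ in Definition \ref{sec:clust-simil-scen-2} together with the strict convexity of $\mathcal{I}_x$ afforded by Assumption \ref{sec:clust-simil-scen-3}, rather than from a crude gap argument.
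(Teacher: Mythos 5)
The paper does not actually prove this theorem---it states the result and defers entirely to the cited reference (\cite{shapiro2009lectures}); your argument is a correct reconstruction of the standard Cram\'er/Chernoff proof that appears there: complement the inclusion event, union-bound over the finite set $X\backslash\mathcal{S}^\epsilon$, reduce ``$x$ is $\gamma$-optimal for the SAA'' to the upper-tail event $\frac{1}{K}\sum_i Y(x,\xi^i)\geq-\gamma$ via feasibility of $u(x)$, and apply the large-deviation upper bound pointwise. Two small points are worth flagging. First, the constant your union bound produces is the cardinality of $X\backslash\mathcal{S}^\epsilon$ (or of $X$), whereas the paper describes $M$ as ``the number of scenarios of the original problem''; the former is what the argument actually delivers, and the paper's gloss on $M$ appears to be a misstatement. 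Second, strict positivity of $\mathcal{I}_x(-\gamma)$ at the boundary $\gamma=\epsilon$ requires $-\gamma$ to lie strictly above $\E[Y(x,\xi)]\leq-\epsilon^*$; the paper's Definition \ref{sec:clust-simil-scen-2} only asserts $\epsilon^*\geq\epsilon$, but for finite $X$ one in fact has $\epsilon^*>\epsilon$ (each $x\notin\mathcal{S}^\epsilon$ satisfies $\nu(x)-\nu^*>\epsilon$ and the minimum over a finite set of such values is still strictly greater than $\epsilon$), which closes the gap you correctly identified as the delicate step.
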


After clustering the scenarios, the reduced sample is no longer iid, 
and we need Theorem \ref{sec:clust-simil-scen-4} to obtain the new
convergence rate results. Let
$\beta:=\dfrac{K(\delta)\delta}{2K}+\dfrac{K(\delta)\mathbb{D}}{2K}$
be the error bound of the clustering $K(\delta)$ scenarios, and we solve the
SAA model for a $\tau$-optimal solution. We denote
$\mathcal{S}_{K,\delta}^{\tau,\beta}$ as the set of resulting optimal
solution, such that $\mathcal{S}_{K,\delta}^{\tau,\beta}\subset
\mathcal{S}_K^\gamma$. We have
\begin{theorem}
  \label{sec:clust-simil-scen-5}
  For an iid sample of size $K$ and $\delta>0$, we cluster $K(\delta)$
  scenarios with the error bound $\beta$, and we solve the SAA with a 
  reduced sample and with an accuracy of $\tau$. If $\mathcal{S}_{K,\delta}^{\tau,\beta}\subset
  \mathcal{S}_K^\gamma$ is true, then
  \begin{equation}
    \label{eq:32}
    \lim_{K\rightarrow \infty}\sup\dfrac{1}{K}\log\bigg[1-\mathbb{P}(\mathcal{S}_{K,\delta}^{\tau,\beta}\subset
\mathcal{S}^\epsilon)\bigg]\leq -\eta({\gamma,\epsilon})
  \end{equation}
\end{theorem}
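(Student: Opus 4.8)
The plan is to reduce the statement to the large-deviation bound already established in Theorem~\ref{sec:clust-simil-scen-4}, using the hypothesized inclusion $\mathcal{S}_{K,\delta}^{\tau,\beta}\subset \mathcal{S}_K^\gamma$ as the bridge. The key observation is that this inclusion is a deterministic relation among the solution sets, whereas the only randomness enters through the event $\{\mathcal{S}_K^\gamma\subset \mathcal{S}^\epsilon\}$, whose complementary probability Theorem~\ref{sec:clust-simil-scen-4} already controls. By transitivity of set inclusion, on any sample for which $\mathcal{S}_K^\gamma\subset \mathcal{S}^\epsilon$ holds we automatically obtain $\mathcal{S}_{K,\delta}^{\tau,\beta}\subset \mathcal{S}_K^\gamma\subset \mathcal{S}^\epsilon$; hence the event $\{\mathcal{S}_K^\gamma\subset \mathcal{S}^\epsilon\}$ is contained in the event $\{\mathcal{S}_{K,\delta}^{\tau,\beta}\subset \mathcal{S}^\epsilon\}$.

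First I would translate this event inclusion into a probability inequality by monotonicity of $\mathbb{P}$, giving $\mathbb{P}(\mathcal{S}_K^\gamma\subset \mathcal{S}^\epsilon)\le \mathbb{P}(\mathcal{S}_{K,\delta}^{\tau,\beta}\subset \mathcal{S}^\epsilon)$, equivalently $1-\mathbb{P}(\mathcal{S}_{K,\delta}^{\tau,\beta}\subset \mathcal{S}^\epsilon)\le 1-\mathbb{P}(\mathcal{S}_K^\gamma\subset \mathcal{S}^\epsilon)$. Next I would invoke Theorem~\ref{sec:clust-simil-scen-4} to bound the right-hand side by $M e^{-K\eta(\gamma,\epsilon)}$, so that $1-\mathbb{P}(\mathcal{S}_{K,\delta}^{\tau,\beta}\subset \mathcal{S}^\epsilon)\le M e^{-K\eta(\gamma,\epsilon)}$. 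Taking $\tfrac{1}{K}\log$ on both sides yields $\tfrac{1}{K}\log[1-\mathbb{P}(\mathcal{S}_{K,\delta}^{\tau,\beta}\subset \mathcal{S}^\epsilon)]\le \tfrac{\log M}{K}-\eta(\gamma,\epsilon)$, and since $M$ is a fixed finite constant (the number of scenarios of the original problem), $\tfrac{\log M}{K}\to 0$. Passing to the limit superior as $K\to\infty$ then delivers the claimed bound $-\eta(\gamma,\epsilon)$.

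The step I expect to demand the most care is not the chain of inequalities, which is routine, but the justification that the hypothesis $\mathcal{S}_{K,\delta}^{\tau,\beta}\subset \mathcal{S}_K^\gamma$ is compatible with the chosen accuracies. This inclusion encodes that a $\tau$-optimal solution of the reduced sample is a $\gamma$-optimal solution of the full SAA, and it is precisely here that the clustering error bound $\beta$ of \eqref{eq:19} must be absorbed: since $|\tilde\nu_K(x)-\nu_K(x)|\le\beta$ uniformly in $x$, a $\tau$-optimal point of $\tilde\nu_K$ is at worst $(\tau+2\beta)$-optimal for $\nu_K$, so one requires $\gamma\ge \tau+2\beta$ for the hypothesis to be attainable. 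A secondary technicality is that $1-\mathbb{P}(\mathcal{S}_{K,\delta}^{\tau,\beta}\subset \mathcal{S}^\epsilon)$ may vanish for some $K$, rendering its logarithm $-\infty$; this poses no difficulty, as it only pushes the limit superior further below $-\eta(\gamma,\epsilon)$ and thus preserves the inequality.
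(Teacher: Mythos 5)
Your proposal is correct and follows essentially the same route as the paper's own proof: use the hypothesized inclusion $\mathcal{S}_{K,\delta}^{\tau,\beta}\subset \mathcal{S}_K^\gamma$ to get $1-\mathbb{P}(\mathcal{S}_{K,\delta}^{\tau,\beta}\subset \mathcal{S}^\epsilon)\leq 1-\mathbb{P}(\mathcal{S}_K^\gamma\subset \mathcal{S}^\epsilon)$, invoke Theorem~\ref{sec:clust-simil-scen-4} for the bound $Me^{-K\eta(\gamma,\epsilon)}$, and take $\tfrac{1}{K}\log$ and the limit superior. Your added remarks on the attainability condition $\gamma\geq \tau+2\beta$ and on the degenerate case where the logarithm is $-\infty$ are careful refinements the paper omits, but they do not change the argument.
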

\begin{proof}
  Since $\mathcal{S}_{K,\delta}^{\tau,\beta}\subset
  \mathcal{S}_K^\gamma$, we have
  \begin{equation}
    \label{eq:33}
    \mathbb{P}(\mathcal{S}_{K,\delta}^{\tau,\beta}\subset
  \mathcal{S}^\epsilon)\geq \mathbb{P}(\mathcal{S}_K^\gamma\subset \mathcal{S}^\epsilon)
\end{equation} and we apply Theorem \ref{sec:clust-simil-scen-5},
\begin{equation}
  \label{eq:34}
  1-\mathbb{P}(\mathcal{S}_{K,\delta}^{\tau,\beta}\subset
  \mathcal{S}^\epsilon)\leq M e^{-K\eta(\gamma,\epsilon)}.
\end{equation}
Applying \eqref{eq:32} and \eqref{eq:34}, we reach the result. 
\end{proof}

We note that the above result is developed for the stochastic program,
which relaxes the integer constraints, if necessary. Because all of the
feasible sets are bounded, non-empty, and non-degenerate, we can use
similar arguments to draw conclusions regarding the convergence
in probability and consistency for stochastic integer programs. 
We now present the clustering approach by steps.
\begin{enumerate}[Step 1.]
\item Generate a large enough iid sample of size $K$. By relaxing
  the integer variables, if necessary, we have $K$ scenario polyhedrons.
\item Calculate the L\"owner-John Ellipsoid for each scenario
  polyhedron.
  \item Calculate $\sigma(k)$, and $\kappa(k), k=1,\ldots,K$
    for each scenario polyhedron. 
  \item Determine the value of $\delta$, which will result in $K(\delta)$
    scenarios removed from the original sample. The choice of $\delta$
    would be problem-dependent. 
  \item Cluster all scenarios, such that $|\sigma(k)-\sigma(j)|\leq
    \dfrac{\delta}{2}$ and $|\kappa(k)-\kappa(j)|\leq
    \dfrac{\delta}{2}$, and replace them with one representative scenario
    $\xi^k, k\in \mathcal{K}$.
    \item Solve the SAA with the reduced sample. 
\end{enumerate}

The above method performs very well regarding  
solution quality for the problems in Section
\ref{sec:numerical-results}. In the next subsection, we present the
connection 
between our method with the other sampling methods. Basically, our
method and other sampling methods share the same goal to solve the
tractability issue. 

\subsection{Connection with other sampling methods}
\label{sec:conn-with-other}

Our method of clustering scenarios are well connected with other
sampling methods surveyed in \cite{homem2014monte} that all sampling
methods, including our clustering method in this paper, would reduce
the sample size for better tractability and high-quality
solution. The SAA method solves a deterministic optimization problem
as the approximate to the original stochastic programming. The SAA
method using iid samples has an advantage over other 
non-iid sampling methods that it simplifies the underlying mathematics
of many statistical method and it assures 
$\dfrac{1}{K}\sum_{k=1}^KQ(x,\xi^k)$ to follow a normal
distribution. The SAA method using iid samples also has nice
theoretical results such as convergence in probability and
consistency. However, the required sample size may be overwhelmingly
large and the scale of the resulting deterministic optimization
problem may easily be out of control. Thus, the non-iid
sampling methods are developed to find good approximations to the original
distribution while not raising the tractability issue.

Most sampling methods work on the distribution side that the
researchers attempt to find a good replacement to the original
distribution. If the goal is achieved, the non-iid sample will be used
to calculate $\dfrac{1}{K}\sum_{k=1}^KQ(x,\xi^k)$. Our method,
however, focuses directly on the value of the recourse function of any
first-stage decision, $Q(x,\xi^k)$. Instead of finding a good
replacement to the original distribution, we will sample the scenarios
which lead to representative values of $Q(x,\xi^k)$, which are bounded
by the coordinates. Thus, the
difference between our method and other non-iid sampling method is
that our method is a recourse-value-oriented method.

Such a difference will disappear in a newsvendor problem as follows.
\begin{example}
  Consider a seller that must choose the amount $x$ of inventory to
  obtain at the beginning of a selling season. The decision is made
  only once, i.e., there is no opportunity to replenish inventory
  during the selling season. The demand $\xi$ during the selling
  season is a nonnegative random variable with cumulative distribution
  function $F$. The cost of obtaining inventory is $c$ per unit. The
  product is sold at a given price $r$ per unit during the selling
  season, and at the end of the season unsold inventory has a salvage
  value of $v$ per unit. The seller wants to choose the amount $x$ of
  inventory that solves
  \begin{equation}
    \label{eq:22}
    \min_x\ \{g(x)=\E[cx-r\min\{x,\xi\}-v\max\{x-\xi,0\}]\}
  \end{equation}
\end{example}
If we present the newsvendor problem in the form of two-stage
stochastic programming, we have
\begin{equation}
  \label{eq:38}
  \min_x\ cx +\E[Q(x,\xi)]\nonumber
\end{equation}
\begin{equation}
  \label{eq:39}
  \textrm{where}\ Q(x,\xi):=\min_{y^+,y^-}\{(r-c)y^-+(c-v)y^+|
  x-y^++y^-=\xi, y^+,y^-\geq 0\}\nonumber
\end{equation}
For a scenario $\xi^k$, our method will have the following model as a
start.
\begin{equation}
  \label{eq:40}
  \kappa(k):=\min_{x,y^+,y^-}\{cx +(r-c)y^-+(c-v)y^+,x-y^++y^-=\xi^k, y^+,y^-\geq 0\}
\end{equation}
For this problem, the L\"owner-John ellipsoid degenerates and
$\sigma(k)=0$ and $\kappa(k)=cx^k$. The optimal solution is
$x^k=\xi^k$ for \eqref{eq:40} and the coordinate becomes
$(0,cx^k)=(0,c\xi^k)$. Thus, our clustering method will consolidate 
similar scenarios $\xi^k$ and $\xi^j, j\neq k$, such that
$|c\xi^k-c\xi^j|\leq \dfrac{\delta}{2}$. We now realize that for the
newsvendor problem, our clustering method behaves just like other
sampling methods such as the importance sampling and the Quasi-Monte
Carlo method to work on the distribution side. This example shows that
our method shares the same goal as other sampling methods, which is to
approximate the value of $\dfrac{1}{K}\sum_{k=1}^KQ(x,\xi^k)$. Our
clustering method works on a large iid sample for the sake of
preserving the nice theoretical results and then reduces the sample
size by focusing on the recourse-value at any first-stage decision
rather than the original distribution. 

We present a cost-benefit discussion for our clustering method with
respect to the
stochastic program, in particular, the stochastic integer program. For
an iid sample of size $K$, the cost involves solving the $K$ Semi-Definite
Programs (SDP) in order to calculate the L\"owner-John Ellipsoids. Thanks to
the advance of convex optimization, the cost is less of a concern
because the SDP can be solved very quickly and efficiently. Also, the
clustering method can highlight the scenarios, which greatly impact the
recourse value, rather than treating every scenario
evenly. Decision-makers may be more interested in identifying several
scenarios that deserve more attention.

The benefit of our approach is that we greatly reduce the number of
scenarios without compromising solution quality. A reduction in the 
scenarios implies a reduction in the number of decision variables and
constraints. In the following section, we demonstrate that the
clustering method reduces an integer program of 202 integer
variables and 201 integer constraints to another instance of 20
integer variables and 19 integer constraints. Consider the fact that
when the removed decision variables are integers, the benefit of
reducing the problem to its fraction can be easily justified. In
real-world implementation, practitioners can start 
clustering scenarios at an early time in order to ``select''
scenarios from a iid sample of massive scenarios. Once a decision is
demanded by customers at a later time, the practitioners can feed a
much smaller model with the selected scenarios to obtain a
high-quality solution. 

\section{Numerical results}
\label{sec:numerical-results}

In this section, we show three representative problems to show that our method would significantly reduce
the sample size without compromising the convergence or the consistency results of
the SAA. We run the numerical experiments on the platform of Windows 10
with Matlab R2018b and SeDuMi solver packages with the Intel i7 processor
and 16GB RAM.

\begin{example}
We first have the following deterministic integer programming:
\begin{equation}
  \label{eq:4}
  \min_{x_1,x_2}\ \{2x_1+3x_2| x_1+x_2\leq 100, 2x_1+6x_2\geq \xi_1,
  3x_1+3x_2\geq \xi_2, x_1,x_2\geq 0,\ x_1, x_2
  \textrm{ are integers.}\}\nonumber
\end{equation}
where $\xi_1, \xi_2$ are demands. When the demands become random, the
amount of the shortage has to be bought at the prices
$q=(q_1,q_2)'=(7,12)'$. We have the stochastic programming as follows:
\begin{alignat}{2}
  \label{eq:6}
  \min_{x_1,x_2}\ &2x_1+3x_2+\E[Q(x_1,x_2,\xi_1,\xi_2)]\nonumber\\
  \textrm{subject to}\ &x_1+x_2\leq 100,x_1,x_2\geq 0,\ x_1, x_2
  \textrm{ are integers.} 
\end{alignat} where
\begin{alignat}{2}
  \label{eq:7}
  Q(x_1,x_2,\xi_1,\xi_2):=\min\ &7 y_1+ 12 y_2\nonumber\\
  \textrm{subject to}\ &y_1= \hat \xi_1-2x_1-6x_2\\
  & y_2= \hat \xi_2-3x_1-3x_2\nonumber\\
  & y_1,y_2\geq 0,\ y_1, y_2 \textrm{ are integers} \nonumber
\end{alignat} where $\hat \xi_1$ and $\hat \xi_2$ are the realized
demands. 

Let $\xi_1$ and $\xi_2$ be uniform discrete random variables with a
range of 
$310, 311, \ldots, 319$ and $292, 293, \ldots, 301$,
respectively. Let $\xi_1$ and $\xi_2$ be independent. The total
number of scenarios is 100, with even probabilities, and we solve the
model with all of the scenarios,
\begin{equation}
  \label{eq:35}
  x^*=[70;30], \textrm{ with the optimal value of } 231.2. 
\end{equation}
The problem with all of the scenarios is an integer program with 202
variables and 201 constraints. The solution \eqref{eq:35} is the
optimal solution to the original. We use the optimal solution to
benchmark our clustering method. 

We calculate all coordinates $(\sigma(k),\kappa(k))$ for each scenario
polyhedron and we generate the following scatter plot.
\begin{figure}[htbp]
  \centering
  \includegraphics[scale=0.5]{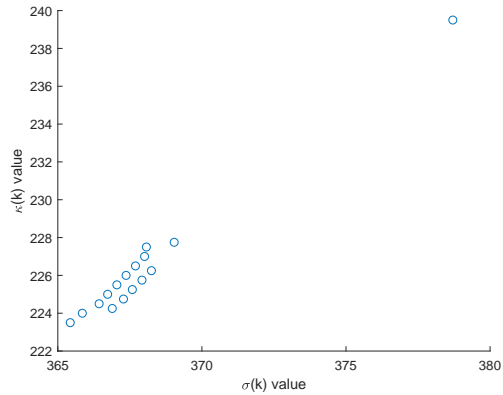}
  \caption{The scatter plot of the coordinates (overlapping points exist)}
  \label{fig:small}
\end{figure}
We adjust $\delta=1.8$ to create grids to cluster 91 scenarios with a ``similar'' spectrum. That
is, we have the following 9 scenarios as the representative
scenarios in Table \ref{tab:ex1}. 
\begin{table}[htbp]
  \centering
  \begin{tabular}[htbp]{c|c|c|c|c|c|c|c|c|c}
    \hline
    $\xi_1$ & 314 & 310 & 310 & 310 & 310 & 315 & 316 & 313 & 310 \\
    \hline
    $\xi_2$ & 301 & 293 & 295 & 296 & 299 & 300 & 300 & 301 & 301\\
    \hline
    Probability & 0.0515 & 0.103 & 0.0412 & 0.3505 & 0.3195 & 0.0309 &
                                                                       0.0618
                                                      & 0.0103 & 0.0309\\ 
    \hline
  \end{tabular}
  \caption{Selected scenarios with $K=100$, $\delta=1.8$, and
    $K(\delta)=91$. }
  \label{tab:ex1}
\end{table}
We solve the problem with only 9 representative scenarios out of 100 scenarios, and the
solution is $[70; 30]$. 
The new problem with the reduced sample has only 20 integer variables
and 19 integer constraints. 
\end{example}

\begin{example}
  This example is called the aircraft allocation problem in
  \cite{dantzig2016linear} and the uncertainty is modeled by 750
  scenarios. We generate all these scenarios and calculate their
  L\"owner-John ellipsoids and coordinates. We plot these values in
  Figure \ref{fig:air}. 
\begin{figure}[htbp]
  \centering
  \includegraphics[scale=0.5]{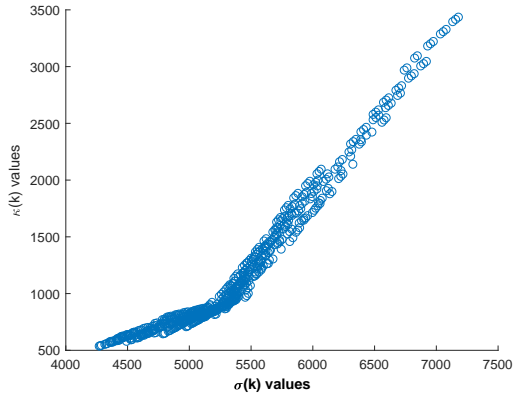}
  \caption{Coordinates of 750 scenarios of the aircraft allocation
    problem} 
  \label{fig:air}
\end{figure}
The ranges of $\sigma(k)$ and $\kappa(k)$ are $[4260,7180]$ and
$[530,3440]$, respectively. We adjust the number bins to evenly cover
both ranges and create grids. Scenarios falling into the same grid
will be treated as similar scenarios. When we set $\delta=29$, we have
$100\times 100$ grids and we consolidate 415 scenarios. Using the remaining
335 scenarios, we obtain the same optimal solution as the published
result. Furthermore, when we set $\delta=290$, we have only $10\times 10$ grids and we
treat the scenarios, whose coordinates fall to the same grid, as
similar scenarios. After clustering similar scenarios, we have 91
scenarios left. Using this reduced sample of 91 scenarios only, we
still obtain the same optimal solution as the published result.   
\end{example}

\begin{example}
We now present the result of another well-known stochastic program,
e.g., ``LandS.'' The problem description is in
\cite{linderoth2006empirical}, and its second-stage decisions are
integers. Our clustering method is applied to a sample of size
$K=20,000$. Without
clustering, the SAA will use an iid sample of size $K=20,000$, which
implies that the mixed-integer program has 240,004 variables, and
240,000 variables among them are integers. It also indicates 140,002 mixed
but mostly integer constraints. When we cluster half of the 
scenarios, the SAA with the reduced sample will have only 120,004
variables and 70,002 constraints. If we cluster scenarios with
$\delta=0.05$, we cluster 18,159 scenarios, such that the equivalent mixed-integer
program will have 22,096 variables and 12,889 constraints, i.e., a
reduction of 89\% scenarios, 90\% variables and constraints. 

The advantage of our method is significant. On an average computer,
the computational time to solve SDP for the coordinate $(\kappa(k),
\sigma(k))$ for each scenario polyhedron is less than 20 seconds for the 
LandS. For
$K=20,000$ scenarios, it will take 240,000 seconds, i.e., less than 67
hours. Since clustering can be deployed to computer cluster, and all of 
the computational tasks could be deployed in parallel, the time would be
greatly shortened. Supposing we have a computer cluster of 10 average
computers, it will take less than 7 hours. The clustering time could
be further shortened by coding the convex optimization solver, which
currently uses Matlab, with more efficient languages. The reduced problem is
significantly friendlier to solvers regarding the scale. Given the
well-known difficulty of the integer program, such a reduction in the 
problem scale is always well justified.
\end{example}

\section{Conclusion}
\label{sec:conclusion}

In this paper, we propose an improvement to the SAA method and the key
idea is to attach a measure of similarity, the coordinate 
$(\sigma(k),\kappa(k))$, to each sampled scenario $\xi^k$. We cluster
the scenarios of similar measures to 
reduce the sample size. We show that the clustering method inherits
both the consistency and convergence in probability of the SAA. The clustering
method will significantly reduce a large enough sample to a small, but
representative one to deliver a timely solution without compromising 
solution quality. The implementation of clustering would be a 
distributed computer cluster, in which the auxiliary computational
tasks, e.g., calculating the L\"owner-John Ellipsoid, and calculating
the coordinate, would be completed by low-cost computers
deployed in parallel rather than expensive supercomputers. The benefit
of clustering is that it reduces the scale of the stochastic program to its
fraction. In numerical examples, nearly 90\% of the integer variables
and constraints are clustered, i.e., removed. Also, the clustering
method will highlight a subset of scenarios requiring more
attention from decision-makers because these scenarios will
generate a significant impact on the optimal solution, compared to the
remaining scenarios.  

This method is worth more intensive testing because not only it
identifies a massive number of scenarios with uniquely valued
coordinates, but also it preserves the nice theoretical results of
SAA. This method would be established as an alternative way of
sampling to solve the tractability issue of stochastic
programming. Furthermore, this method may lead to a distributed
computational infrastructure to solve stochastic programming. The
practitioners can start the solving process long before when the 
solution is demanded. The coordinates of sampled scenarios would be
separately calculated on low-cost computers, which are deployed as
clusters. The solution quality will be continuously improved as more
iid scenarios are processed. 

%
%
%


{\bf ACKNOWLEDGMENT}\ \\
The authors gratefully acknowledge the continued
  support of the School of Business Administration, University of Dayton.





\bibliographystyle{ormsv080}
\bibliography{spa}

\begin{thebibliography}{15}
\expandafter\ifx\csname natexlab\endcsname\relax\def\natexlab#1{#1}\fi
\expandafter\ifx\csname url\endcsname\relax
  \def\url#1{{\tt #1}}\fi
\expandafter\ifx\csname urlprefix\endcsname\relax\def\urlprefix{URL }\fi
\expandafter\ifx\csname urlstyle\endcsname\relax
  \expandafter\ifx\csname doi\endcsname\relax
  \def\doi#1{doi:\discretionary{}{}{}#1}\fi \else
  \expandafter\ifx\csname doi\endcsname\relax
  \def\doi{doi:\discretionary{}{}{}\begingroup \urlstyle{rm}\Url}\fi \fi

\bibitem[{Boyd and Vandenberghe(2004)}]{boyd2004convex}
Boyd, Stephen, Lieven Vandenberghe. 2004.
\newblock {\it Convex optimization\/}.
\newblock Cambridge University Press.

\bibitem[{Dantzig(2016)}]{dantzig2016linear}
Dantzig, George. 2016.
\newblock {\it Linear programming and extensions\/}.
\newblock Princeton university press.

\bibitem[{Dupa{\v{c}}ov{\'a} et~al.(2003)Dupa{\v{c}}ov{\'a}, Gr{\"o}we-Kuska,
  and R{\"o}misch}]{dupavcova2003scenario}
Dupa{\v{c}}ov{\'a}, Jitka, Nicole Gr{\"o}we-Kuska, Werner R{\"o}misch. 2003.
\newblock Scenario reduction in stochastic programming.
\newblock {\it Mathematical Programming\/} {\bf 95}(3) 493--511.

\bibitem[{Dyer and Stougie(2006)}]{dyer2006computational}
Dyer, Martin, Leen Stougie. 2006.
\newblock Computational complexity of stochastic programming problems.
\newblock {\it mathematical Programming\/} {\bf 106}(3) 423--432.

\bibitem[{Homem-de Mello(2008)}]{homem2008rates}
Homem-de Mello, Tito. 2008.
\newblock On rates of convergence for stochastic optimization problems under
  non--independent and identically distributed sampling.
\newblock {\it SIAM Journal on Optimization\/} {\bf 19}(2) 524--551.

\bibitem[{Homem-de Mello and Bayraksan(2014)}]{homem2014monte}
Homem-de Mello, Tito, G{\"u}zin Bayraksan. 2014.
\newblock Monte carlo sampling-based methods for stochastic optimization.
\newblock {\it Surveys in Operations Research and Management Science\/} {\bf
  19}(1) 56--85.

\bibitem[{John(2014)}]{john2014extremum}
John, Fritz. 2014.
\newblock Extremum problems with inequalities as subsidiary conditions.
\newblock {\it Traces and emergence of nonlinear programming\/}. Springer,
  197--215.

\bibitem[{Kim et~al.(2015)Kim, Pasupathy, and Henderson}]{kim2015guide}
Kim, Sujin, Raghu Pasupathy, Shane~G Henderson. 2015.
\newblock A guide to sample average approximation.
\newblock {\it Handbook of simulation optimization\/}. Springer, 207--243.

\bibitem[{Kleywegt et~al.(2002)Kleywegt, Shapiro, and Homem-de
  Mello}]{kleywegt2002sample}
Kleywegt, Anton~J, Alexander Shapiro, Tito Homem-de Mello. 2002.
\newblock The sample average approximation method for stochastic discrete
  optimization.
\newblock {\it SIAM Journal on Optimization\/} {\bf 12}(2) 479--502.

\bibitem[{Le{\"o}vey and R{\"o}misch(2015)}]{leovey2015quasi}
Le{\"o}vey, Hernan, Werner R{\"o}misch. 2015.
\newblock Quasi-monte carlo methods for linear two-stage stochastic programming
  problems.
\newblock {\it Mathematical Programming\/} {\bf 151}(1) 315--345.

\bibitem[{Linderoth et~al.(2006)Linderoth, Shapiro, and
  Wright}]{linderoth2006empirical}
Linderoth, Jeff, Alexander Shapiro, Stephen Wright. 2006.
\newblock The empirical behavior of sampling methods for stochastic
  programming.
\newblock {\it Annals of Operations Research\/} {\bf 142}(1) 215--241.

\bibitem[{Rahimian et~al.(2018)Rahimian, Bayraksan, and Homem-de
  Mello}]{rahimian2018identifying}
Rahimian, Hamed, G{\"u}zin Bayraksan, Tito Homem-de Mello. 2018.
\newblock Identifying effective scenarios in distributionally robust stochastic
  programs with total variation distance.
\newblock {\it Mathematical Programming\/}  1--38.

\bibitem[{Shapiro et~al.(2009)Shapiro, Dentcheva, and
  Ruszczy{\'n}ski}]{shapiro2009lectures}
Shapiro, Alexander, Darinka Dentcheva, Andrzej Ruszczy{\'n}ski. 2009.
\newblock {\it Lectures on stochastic programming: modeling and theory\/}.
\newblock SIAM.

\bibitem[{Sun and Freund(2004)}]{sun2004computation}
Sun, Peng, Robert~M Freund. 2004.
\newblock Computation of minimum-volume covering ellipsoids.
\newblock {\it Operations Research\/} {\bf 52}(5) 690--706.

\bibitem[{Xiao and Zhang(2014)}]{xiao2014proximal}
Xiao, Lin, Tong Zhang. 2014.
\newblock A proximal stochastic gradient method with progressive variance
  reduction.
\newblock {\it SIAM Journal on Optimization\/} {\bf 24}(4) 2057--2075.

\end{thebibliography}

\end{document}